\newcommand{\oset}[3][0ex]{%
  \mathrel{\mathop{#3}\limits^{
    \vbox to#1{\kern-2\ex@
    \hbox{$\scriptstyle#2$}\vss}}}}
\theoremstyle{plain}
\newtheorem{theorem}{Theorem}[section]
\newtheorem{lemma}[theorem]{Lemma}
\newtheorem{proposition}[theorem]{Proposition}
\theoremstyle{definition}
\newtheorem{definition}{Definition}[section]
\newtheorem{notation}{Notation}[section]
\theoremstyle{remark}
\newtheorem{remark}{Remark}[section]
\title{On $(\alpha,1,0)$-derivations of anti-commutative algebras}
\def\blfootnote{\xdef\@thefnmark{}\@footnotetext}
\renewcommand\@date
\author{}
\begin{document}

\maketitle

\blfootnote{2010 \textit{Mathematics Subject Classification:} 16W25. \textit{Key words and phrases:} Anti-commutative algebras; Lie algebras; Invariants of algebras; Extended derivations of Algebras; Isomorphism problem.}

\begin{abstract}
We solve an open problem concerning the well-known $(\alpha,\beta,\gamma)$-derivations, proving that the spaces of $(\alpha,1,0)$-derivations of any Lie algebra are isomorphic ($\alpha\neq 0,1$). Also, we prove sharp bounds for the \textit{invariants functions} defined by such spaces.
\end{abstract}

\section{Introduction}{$ $}

For simplicity, we assume throughout this paper that all $\mathbb{K}$-algebras are finite dimensional and that the base field $\mathbb{K}$ is of characteristic zero.

\begin{definition}
Let $\mathbb{K}$ be a field. An \textit{anti-commutative algebra over} $\mathbb{K}$ is a vector space over $\mathbb{K}$, say $V$, together with a bilinear  product map $\mu:V \times V \rightarrow V$
satisfying the identity
  $$
  \mu(X,Y) = - \mu(Y,X)  \quad \mbox{ for all } X,Y\in V.
  $$
From now on, we indulge in the typical abuse of notation of identifying the algebra $\mathfrak{A}=(V,\mu)$ with its underlying vector space $V$.
\end{definition}

\begin{notation}
Let $U$, $V$ and $W$ be vector spaces over $\mathbb{K}$. Let us write by $L(V;W)$ the set of all linear transformations from $V$ to $W$, and let $\Theta_{V;W} \in L(V;W)$ be denote \textit{the zero map from $V$ to $W$}, which is defined by $\Theta_{V;W}(X)= 0_{W}$ for all $X \in V$.
If $W$ is a vector subspace of $V$, we denote by $\iota_{W;V}$ (or simply $\iota_{W}$ when no confusion can arise) the \textit{inclusion map} from $W$ to $V$,
and if $U$ and $W$ are \textit{complementary subspaces} of $V$; i.e. $V$ is an \textit{internal direct sum} of $U$ and $W$, we write $\pi_{U}^{U \oplus W}$ and $\pi_{W}^{U \oplus W}$ for the \textit{projection maps} from $V$
onto $U$ and $W$, respectively (or simply $\pi_{U}$ and $\pi_{W}$ when the decomposition $V=U \oplus W$ is clear from the context).
\end{notation}

\begin{definition}[{\cite[\S 2]{NovotnyHrivnak}}]
Let $\mathfrak{A}=(V,\mu)$ be an anti-commutative $\mathbb{K}$-algebra and let $\alpha, \beta, \gamma \in \mathbb{K}$ be three constants. An  \textit{$(\alpha,\beta,\gamma)$-derivation} of  $\mathfrak{A}$ is a linear transformation $D$ from $V$ to itself, such that
$$
\alpha D\mu(X,Y) = \beta \mu (DX,Y) + \gamma \mu (X,DY)
$$
for all $X, Y \in V$.
The set all $(\alpha,\beta,\gamma)$-derivations of $\mathfrak{A}$ is denoted by $\mathcal{D}(\alpha,\beta,\gamma)(\mathfrak{A})$.
\end{definition}

If two algebras $\mathfrak{A}$ and $\mathfrak{B}$ are isomorphic, then $\mathcal{D}(\alpha,\beta,\gamma)(\mathfrak{A})$ and $\mathcal{D}(\alpha,\beta,\gamma)(\mathfrak{B})$ are isomorphic vector spaces, and so, $\operatorname{Dim} \mathcal{D}(\alpha,\beta,\gamma)(\mathfrak{A})$ is an \textit{invariant} of algebras. To be more specific, let $C^{2}(V;V)$ denote the vector space consisting of all antisymmetric bilinear product maps of $V$ and let $\operatorname{GL}(V)$ be the \textit{General Linear Group} of the vector space $V$. Consider the natural action of $\operatorname{GL}(V)$ on $C^{2}(V;V)$ by \textit{change of basis}:
$$
g\cdot \mu( X, Y):= g \mu (g^{-1}X, g^{-1} Y).
$$
It follows that two algebras $\mathfrak{A}=(V,\mu)$ and $\mathfrak{B}=(V,\lambda)$ are isomorphic if and only if $\mu$ and $\lambda$ are in the same $\operatorname{GL}(V)$-orbit, and that the function $\chi: C^{2}(V;V) \rightarrow [\!|0,  n^2|\!]:= \{0,1, \ldots, n^2\}$ defined by $\chi(\mu) = \operatorname{Dim}\mathcal{D}(\alpha,\beta,\gamma)(V,\mu)$ is an \textit{invariant function} for the action (where $n$ denotes the dimension of $V$). By abuse of notation, if $\tau$ is an invariant function, we write sometimes $\tau(\mathfrak{A})$ instead of $\tau(\mu)$.

The $(\alpha,\beta,\gamma)$-derivations are a particular case of \textit{extended derivations} (see \cite{FernandezCulma}), which can be used to study \textit{degenerations of algebras} (a very active research topic with applications in mathematical physics, differential geometry, harmonic analysis and algebra).

It is an open problem (posed in \cite[\S. 4]{NovotnyHrivnak}, fifth remark) to determine the behavior of the one-parameter family of invariant functions $\{\phi_{n,t}\}_{t \in \mathbb{K}}$, $\phi_{n,t}: C^{2}(V;V) \rightarrow [\!|0,  n^2|\!]$ given by $\phi_{n,t}(\mu)=\operatorname{Dim}\mathcal{D}(t,1,0)(V,\mu)$. Is it an infinite family of functions? If $\widehat{\phi}_{n,t}$ is the \textit{restriction} of $\phi_{n,t}$ to $\mathcal{L}(V)$, \textit{the variety of Lie algebra laws on $V$}, is $\{\widehat{\phi}_{n,t}\}_{t \in \mathbb{K}}$ a finite set? The aim of this paper is to answer both questions and give sharp bounds of $\widehat{\phi}_{n,t}$.

\section*{Acknowledgements}
The author is greatly indebted to Nadina Rojas for many stimulating conversations and for her invaluable support.

\section{The family $\{\phi_{n,t}\}_{t \in \mathbb{K}}$ is infinite }$ $

 To answer the first one, we will give a one-parameter family of pairwise non-isomorphic anti-commutative algebras of dimension $n$ which can be distinguished by using the family $\{\phi_{n,t}\}_{t \in \mathbb{K}}$.
 If $t$ is not a root of unity and $t\neq 0,1$, and if $\mathfrak{A}$ is a centerless anti-commutative algebra over an algebraically closed field $\mathbb{K}$, then it is a simple matter to see that any $(t,1,0)$-derivation of $\mathfrak{A}$ is a nilpotent linear transformation. So, we start by assuming that the dimension of $V$ is four and let $\{e_1,e_2,e_3,e_4\}$ be a basis of $V$. Consider the linear transformation $D:V\rightarrow V$ defined by $e_1 \mapsto e_2$, $e_3 \mapsto e_4$ and $e_2, e_4 \mapsto 0$. Now, we can ask which are all four-dimensional anti-commutative algebras such that $D$ is a $(t,1,0)$-derivation; such algebras are of the form:
$$
\left\{
\begin{array}{l}
\mu({e_1}, {e_3}) = a{e_1}+c{e_2}+b{e_3}+d{e_4},\\
\mu({e_1}, {e_4}) =ta{e_2}+tb{e_4}, \,
\mu({e_2}, {e_3}) =ta{e_2}+tb{e_4}.
\end{array}
\right.
$$
Note the dependence of the \textit{structure constants} of such algebras on the parameter $t$.

\begin{proposition}\label{dim4}
Let $\mathfrak{A}_{s}=(V,\mu_{s})$ be the one-parameter family of four-dimensional anti-commutative algebras given by
$$
\mu_{s} =
\left\{
\begin{array}{l}
\mu_{s}({e_1}, {e_3}) = {e_1}, \, \mu_{s}({e_1},{e_4})=s{e_2}, \, \mu_{s}({e_2},{e_3})=s{e_2},
\end{array}
\right.
$$
with $s \in \mathbb{K}$. If $t\neq 0,1$, then the value of the invariant function $\phi_{4,t}$ at $\mathfrak{A}_{s}$ is
$$
\phi_{4,t}(\mathfrak{A}_{s}) =
\left\{
\begin{array}{l}
6, \mbox{ if } $s=0$\\
1, \mbox{ if } $s=t$\\
0, \mbox{ otherwise }
\end{array}
\right.
$$
and therefore $\phi_{4,t_1} = \phi_{4,t_2}$ if and only if $t_1 = t_2$ (with $t_1, t_2 \neq 0,1$).
\end{proposition}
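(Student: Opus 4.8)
The plan is to read off $\operatorname{Dim}\mathcal{D}(t,1,0)(\mathfrak{A}_s)$ as the dimension of the solution space of an explicit homogeneous linear system, treating $t$ (with $t\neq 0,1$) and $s$ as parameters, and then to obtain the displayed equivalence as an immediate consequence. First I would write an arbitrary linear map $D$ by its matrix $(d_{ij})$ in the basis $\{e_1,e_2,e_3,e_4\}$, so $De_j=\sum_i d_{ij}e_i$. Since the defining identity $t\,D\mu_s(X,Y)=\mu_s(DX,Y)$ is bilinear, it suffices to impose it on the ordered basis pairs $(e_i,e_j)$; this produces $16$ vector equations, hence a linear system in the $16$ unknowns $d_{ij}$ whose coefficients are polynomials in $t$ and $s$. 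Because $\mu_s$ has only the three nonzero products $\mu_s(e_1,e_3)=e_1$, $\mu_s(e_1,e_4)=se_2$, $\mu_s(e_2,e_3)=se_2$, most of these equations simply force an entry to vanish.

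The informative part of the computation is to isolate the few equations that do more than kill an entry. The diagonal pairs and the pairs with vanishing product contribute only conditions of the form $d_{ij}=0$ or $s\,d_{ij}=0$. Using $t\neq 0,1$, the pair $(e_1,e_3)$ forces $d_{11}=0$ together with $(t-s)\,d_{21}=0$, and the pair $(e_3,e_1)$ forces $d_{33}=t\,d_{11}=0$ together with the coupling relation $t\,d_{21}=s\,d_{43}$. I expect these two relations, $(t-s)\,d_{21}=0$ and $t\,d_{21}=s\,d_{43}$, to be the heart of the matter: their interaction is exactly what produces the three distinct values of the invariant.

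The computation then finishes by a short case analysis on $s$. When $s=0$ the products carrying the factor $s$ disappear, only $(e_1,e_3)$ and $(e_3,e_1)$ remain active, and one checks that precisely the six entries $d_{22},d_{42},d_{23},d_{43},d_{24},d_{44}$ remain free, giving dimension $6$. When $s\neq 0$ every relation $s\,d_{ij}=0$ becomes $d_{ij}=0$, which collapses columns $2$ and $4$ and almost all of columns $1$ and $3$, leaving only $d_{21}$ and $d_{43}$ subject to $(t-s)\,d_{21}=0$ and $d_{43}=(t/s)\,d_{21}$; if $s\neq t$ this forces $d_{21}=d_{43}=0$ and dimension $0$, while if $s=t$ the first relation is vacuous and $d_{21}$ becomes a single free parameter, giving dimension $1$. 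I anticipate that the only real difficulty is bookkeeping—keeping the diagonal, the vanishing-product, and the three nontrivial pairs organized so that no surviving degree of freedom is missed—and I would manage this by grouping the equations according to the second argument $e_j$, since $\mu_s(De_i,e_j)$ then has a uniform shape.

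The final equivalence is then immediate. By the computation the level set $\{s\in\mathbb{K}:\phi_{4,t}(\mathfrak{A}_s)=1\}$ equals $\{t\}$, where $t\neq 0$ guarantees this case is genuinely separated from $s=0$. Hence if $\phi_{4,t_1}=\phi_{4,t_2}$ as functions on $C^2(V;V)$, then evaluating on the family $\{\mathfrak{A}_s\}_{s\in\mathbb{K}}$ gives $\{t_1\}=\{t_2\}$, i.e.\ $t_1=t_2$; the reverse implication is trivial.
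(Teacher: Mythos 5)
Your overall strategy is the same as the paper's: write $D$ in coordinates, impose the identity $tD\mu_s(X,Y)=\mu_s(DX,Y)$ on basis pairs, and solve the resulting linear system; the paper merely organizes the $s\neq 0$ computation by first invoking that a $(t,1,0)$-derivation with $t\neq 0$ preserves $\mathfrak{A}_s^{(2)}=\operatorname{span}\{e_1,e_2\}$ and the centralizer $\mathcal{C}_{\mathfrak{A}_s}(e_3)=\operatorname{span}\{e_3,e_4\}$, which kills the off-diagonal blocks in one stroke, and disposes of $s=0$ via Lemma~\ref{lematecnico} applied to $\mathfrak{aff}(\mathbb{K})\times\mathbb{K}^2$. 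Your final answers (the six free entries for $s=0$; only $d_{21},d_{43}$ surviving for $s\neq 0$, tied by $(t-s)d_{21}=0$ and $d_{43}=(t/s)d_{21}$) are correct, as is the concluding level-set argument.

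There is, however, one concrete slip in your bookkeeping for $s\neq 0$. Your taxonomy covers the diagonal pairs, the vanishing-product pairs, and the two pairs $(e_1,e_3)$, $(e_3,e_1)$ — but this misses the four remaining ordered pairs with \emph{nonzero} product, namely $(e_1,e_4)$, $(e_4,e_1)$, $(e_2,e_3)$, $(e_3,e_2)$, whose equations are not of the form $d_{ij}=0$ or $s\,d_{ij}=0$. These are precisely what eliminate $d_{22}$ and $d_{44}$: e.g.\ $(e_2,e_3)$ gives $ts\,d_{22}=s\,d_{22}$, hence $d_{22}=0$ using $t\neq 1$ and $s\neq0$, and $(e_4,e_1)$ gives $s\,d_{44}=ts\,d_{22}=0$. (This is the step the paper carries out explicitly via $tA\mu_s(e_1,e_4)=tsA^2_2e_2$ versus $\mu_s(e_1,Ae_4)=sA^4_4e_2$.) With only the relations you list, $d_{22}$ and $d_{44}$ would remain free and the dimensions would come out as $3$ and $2$ rather than $1$ and $0$. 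Since your plan is to write out all $16$ equations anyway, this is an omission in the summary rather than in the method, but it should be repaired before the case analysis is asserted.
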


Before proceeding with the proof, let us recall the following definitions:

\begin{definition}[Centralizer and Derived algebra]
  Let $\mathfrak{A}=(V,\mu)$ be an anti-commutative algebra and let $S \subseteq \mathfrak{A}$ be any subset. The \textit{centralizer} in $\mathfrak{A}$ of $S$ is defined as the set:
  $$
  \mathcal{C}_{\mathfrak{A}}(S):= \{X \in \mathfrak{A} : \mu(X,Y)=0, \mbox{ for all } Y\in S\}.
  $$
  The \textit{derived algebra} of $\mathfrak{A}$, denoted by $\mathfrak{A}^{(2)}$, is the \textit{linear span} of the set of all products of pairs of elements of $\mathfrak{A}$.
\end{definition}

\begin{remark}
Centralizers are vector subspaces of $\mathfrak{A}$ and they need not be subalgebras on account of the lack of associativity or similar properties (such as \textit{Jacobi identity}). If $t\neq0$ and $D$ is a $(t,1,0)$-derivation of $\mathfrak{A}$, then it is clear that $D$ preserves centralizers; and in particular \textit{the center} of $\mathfrak{A}$, $\mathcal{Z}(\mathfrak{A}):= \mathcal{C}_{\mathfrak{A}}(\mathfrak{A})$.
\end{remark}

\begin{remark}
It is worth remarking that if $D$ is a $(t,0,1)$-derivation of $\mathfrak{A}=(V,\mu)$, then $tD\mu(X,Y)=\mu(X,DY)$ for all $X,Y \in V$ (because of the anti-commutative property). Therefore, $\mathcal{D}(t,1,0)(\mathfrak{A}) \subseteq \mathcal{D}(0,1,-1)(\mathfrak{A})$.
\end{remark}

\begin{proof}
Let $A:V \rightarrow V$ be a linear transformation defined by $A e_{j} = \sum_{i=1}^{4} A_j^i e_i$, for $j=1,\ldots,4$, where $A_j^i$ are constants in $\mathbb{K}$. We want to determine the conditions that $A$ must satisfy to be in $\mathcal{D}(t,1,0)(\mathfrak{A}_{s})$. We consider the case $s\neq 0$, in which the derived algebra of $\mathfrak{A}_{s}$ is generated by $\{e_1, e_2\}$ and the centralizer of $e_3$ is generated by $\{e_3, e_4\}$. Suppose that $A \in \mathcal{D}(t,1,0)(\mathfrak{A}_{s})$. Since $t\neq 0$, any $(t,1,0)$-derivation preserves the derived algebra and centralizers, we obtain $A_j^i=0$ for each $i=3,4$, $j=1,2$, and also for each $i=1,2$, $j=3,4$. This implies that $A_2^1=0$ and $A_4^3=0$, because $\mu_{s}(A e_2,e_4) = \mu_{s}(e_2,A e_4) = t D\mu_{s}(e_2,e_4)=0$; and so $A e_2 \in \operatorname{span}\{e_1,e_2\} \cap \mathcal{C}_{\mathfrak{A}_{s}}(e_4)$ and $A e_4 \in \operatorname{span}\{e_3,e_4\} \cap \mathcal{C}_{\mathfrak{A}_{s}}(e_2)$.

Next we study the equality between $t A \mu_{s}(e_1,e_3)= t A e_1 = tA_1^1e_1 + tA_1^2 e_2 $, $\mu_{s}(A e_1, e3)= A_1^1e_1 + sA_1^2 e_2$ and $\mu_{s}( e_1, Ae_3)= A_3^3e_1 + s A_3^4 e_2$. Since $t \neq 1$, we have $A_1^1=0$, and so $A_3^3=0$ and also $(t-s)A_1^2=0$, $A_1^2=A_3^4$ (recall $s\neq 0$). It follows immediately that
$A_2^2=0$ and $A_4^4=0$  since $t A \mu_{s}(e_1,e_4) = t A(s e_2) = t s A_2^2 e_2$, $\mu_{s}(Ae_1,e_4)= \mu_{s}(A_1^2 e_2, e_4)=0$ and $\mu_{s}(e_1,Ae_4)= s A_{4}^4 e_2$ must be equal.

Therefore, if $t=s$, then $A_1^2$ is a free variable and $\mathcal{D}(t,1,0)(\mathfrak{A}_{t})$ is a vector space generated by the nilpotent linear transformation $D$ we just defined above, and if $t\neq s$, then $A=0$ and so $\operatorname{dim} \mathcal{D}(t,1,0)(\mathfrak{A}_{s}) = 0$.

The case $s=0$, which corresponds to the Lie algebra $\mathfrak{aff}(\mathbb{K})\times \mathbb{K}^{2}$ (here $\mathfrak{aff}(\mathbb{K}^n)$ is the \textit{{affine Lie algebra}} of $\mathbb{K}^{n}$), is straightforward or it can be solved by using the result of the following Lemma.
\end{proof}

\begin{notation}
 Let $\mathfrak{A}=(V,\mu)$ be an anti-commutative $\mathbb{K}$-algebra. Let us denote by $\Omega(\mathfrak{A})$ the set
 $$
 \{T \in L(V; V) : \operatorname{Im}(T)\subseteq \mathcal{Z}(\mathfrak{A}) \mbox { and } \mathfrak{A}^{(2)}\subseteq \operatorname{Ker}(T)\},
 $$
  which is the intersection of $\mathcal{D}(0,1,0)(\mathfrak{A})$ and $\mathcal{D}(1,0,0)(\mathfrak{A})$, and is isomorphic to $L(\mathfrak{A}/\mathfrak{A}^{(2)}; \mathcal{Z}(\mathfrak{A}))$. Moreover, note that $\Omega(\mathfrak{A})$ is contained in $\mathcal{D}(t,1,0)(\mathfrak{A})$, for any $t \in \mathbb{K}$. 
  The importance of the set $\Omega(\mathfrak{A})$ is in the elementary observation that any $(t,1,0)$-derivation is determined by its action on $\mathfrak{A}^{(2)}$, modulo $\Omega(\mathfrak{A})$, i.e., if $D_1$ and $D_2$ are two  $(t,1,0)$-derivations of $\mathfrak{A}$ that coincide on  $\mathfrak{A}^{(2)}$, then $D_1 - D_2 \in \Omega(\mathfrak{A})$.
\end{notation}

\begin{lemma}\label{lematecnico}
  Let $\mathfrak{A}$ be an anti-commutative $\mathbb{K}$-algebra and suppose $\mathfrak{B}$ is a subalgebra of $\mathfrak{A}$ such that $\mathfrak{A}$ is the direct sum of $\mathfrak{B}$ and $\mathcal{Z}(\mathfrak{A})$. Then
  $\mathcal{D}(t,1,0)(\mathfrak{A})$ is a vector space isomorphic to $\mathcal{D}(t,1,0)(\mathfrak{B}) \times L(\mathfrak{A}/\mathfrak{A}^{(2)}; \mathcal{Z}(\mathfrak{A}))$ if $t\neq 0$.
\end{lemma}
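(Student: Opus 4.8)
The plan is to read off the block structure of a $(t,1,0)$-derivation relative to the decomposition $\mathfrak{A}=\mathfrak{B}\oplus\mathcal{Z}$, where I abbreviate $\mathcal{Z}:=\mathcal{Z}(\mathfrak{A})$, and then to assemble an explicit linear isomorphism onto the claimed product. I would begin by recording two structural facts. Since $\mathcal{Z}$ is central, for $X=B_1+Z_1$ and $Y=B_2+Z_2$ (with $B_i\in\mathfrak{B}$, $Z_i\in\mathcal{Z}$) one has $\mu(X,Y)=\mu(B_1,B_2)$, so the product of $\mathfrak{A}$ depends only on the $\mathfrak{B}$-components; in particular $\mathfrak{A}^{(2)}=\mathfrak{B}^{(2)}\subseteq\mathfrak{B}$. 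Next, because $t\neq 0$, the Remark on centralizers applies: every $D\in\mathcal{D}(t,1,0)(\mathfrak{A})$ preserves the center $\mathcal{Z}$, and from $D\mu(X,Y)=t^{-1}\mu(DX,Y)\in\mathfrak{A}^{(2)}$ it also preserves the derived algebra $\mathfrak{A}^{(2)}$. With respect to $\mathfrak{A}=\mathfrak{B}\oplus\mathcal{Z}$ this puts $D$ in lower block-triangular form: the component $\pi_{\mathfrak{B}}D\iota_{\mathcal{Z}}$ vanishes (center is preserved), and the component $\delta:=\pi_{\mathcal{Z}}D\iota_{\mathfrak{B}}\colon\mathfrak{B}\to\mathcal{Z}$ vanishes on $\mathfrak{A}^{(2)}=\mathfrak{B}^{(2)}$ (derived algebra is preserved).

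I would then define the candidate components. Set $D_{\mathfrak{B}}:=\pi_{\mathfrak{B}}D\iota_{\mathfrak{B}}\in L(\mathfrak{B};\mathfrak{B})$ and check that $D_{\mathfrak{B}}\in\mathcal{D}(t,1,0)(\mathfrak{B})$. The decisive computation is that, for $B_1,B_2\in\mathfrak{B}$, since $\mu(B_1,B_2)\in\mathfrak{A}^{(2)}\subseteq\mathfrak{B}$ and $D$ preserves $\mathfrak{A}^{(2)}$, one gets $t\,D_{\mathfrak{B}}\mu(B_1,B_2)=t\,D\mu(B_1,B_2)=\mu(DB_1,B_2)=\mu(D_{\mathfrak{B}}B_1,B_2)$, the last step using that the central part $\pi_{\mathcal{Z}}DB_1$ is annihilated by $\mu$. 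Writing $\widehat{D}_{\mathfrak{B}}$ for the extension of $D_{\mathfrak{B}}$ by zero on $\mathcal{Z}$, I would verify that $D-\widehat{D}_{\mathfrak{B}}$ has image in $\mathcal{Z}$ and kernel containing $\mathfrak{A}^{(2)}$, i.e. $D-\widehat{D}_{\mathfrak{B}}\in\Omega(\mathfrak{A})$. This defines a linear map $\Phi\colon\mathcal{D}(t,1,0)(\mathfrak{A})\to\mathcal{D}(t,1,0)(\mathfrak{B})\times\Omega(\mathfrak{A})$ by $\Phi(D)=(D_{\mathfrak{B}},\,D-\widehat{D}_{\mathfrak{B}})$.

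It then remains to prove $\Phi$ is bijective. Injectivity is immediate, since $\Phi(D)=(0,0)$ forces $D_{\mathfrak{B}}=0$ and hence $D=\widehat{D}_{\mathfrak{B}}=0$. For surjectivity, given $(D',\omega)\in\mathcal{D}(t,1,0)(\mathfrak{B})\times\Omega(\mathfrak{A})$, I would set $\widetilde{D}:=\widehat{D'}+\omega$ and verify, by running the computation above in reverse, that $\widetilde{D}\in\mathcal{D}(t,1,0)(\mathfrak{A})$ and $\Phi(\widetilde{D})=(D',\omega)$; here it is essential that $\pi_{\mathcal{Z}}\omega\iota_{\mathfrak{B}}$ kills $\mathfrak{B}^{(2)}$, so that in the derivation identity the center contributions genuinely drop out. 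Composing $\Phi$ with the isomorphism $\Omega(\mathfrak{A})\cong L(\mathfrak{A}/\mathfrak{A}^{(2)};\mathcal{Z})$ recorded in the Notation then yields the asserted isomorphism.

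The main obstacle is the twofold derivation verification in the second and third paragraphs: that the compression $D_{\mathfrak{B}}$ is a derivation of $\mathfrak{B}$, and, symmetrically, that reattaching an arbitrary element of $\Omega(\mathfrak{A})$ produces a derivation of $\mathfrak{A}$. Both hinge on the same interplay, namely that $\mu$ sees only $\mathfrak{B}$-components, that $D$ preserves $\mathfrak{A}^{(2)}\subseteq\mathfrak{B}$, and that $t\neq 0$ (so that the center is preserved in the first place). Once these three ingredients are in place, the manipulation of block forms and the bijectivity of $\Phi$ are routine.
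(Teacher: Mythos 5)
Your proposal is correct and follows essentially the same route as the paper: the same compression $\pi_{\mathfrak{B}}\circ D\circ\iota_{\mathfrak{B}}$, the same extension-by-zero in the reverse direction, and the same role for $\Omega(\mathfrak{A})\cong L(\mathfrak{A}/\mathfrak{A}^{(2)};\mathcal{Z}(\mathfrak{A}))$. The only difference is presentational --- you realize the isomorphism as an explicit splitting $D\mapsto(D_{\mathfrak{B}},D-\widehat{D}_{\mathfrak{B}})$, whereas the paper passes to the quotient $\mathcal{D}(t,1,0)(\mathfrak{A})/\Omega(\mathfrak{A})$ and shows the induced map onto $\mathcal{D}(t,1,0)(\mathfrak{B})$ is bijective.
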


\begin{proof}
Define a linear transformation $\varphi: \mathcal{D}(t,1,0)(\mathfrak{A}) \rightarrow \mathcal{D}(t,1,0)(\mathfrak{B}) $ given by $D \mapsto \pi_{\mathfrak{B}} \circ D \circ \iota_{\mathfrak{B}}$.
To see that $\varphi$ is well defined; i.e. $\pi_{\mathfrak{B}} \circ D \circ \iota_{\mathfrak{B}}$ is a $(t,1,0)$-derivation of $\mathfrak{B}$, let $X, Y \in \mathfrak{B}$ be arbitrary:
 \begin{align*}
  \pi_{\mathfrak{B}} \circ D \circ \iota_{\mathfrak{B}} (t \mu(X,Y) )&=    \pi_{\mathfrak{B}} \circ D (t \mu(X,Y)) && (\text{since $\mathfrak{B}$ subalgebra of $\mathfrak{A}$})\\
                                                                     &=  \pi_{\mathfrak{B}}(\mu(DX,Y)) && (\text{since ${D} \in \mathcal{D}(t,1,0)(\mathfrak{g}) $ })\\
                                                                     &=  \mu(DX,Y) && (\text{since $\mathfrak{A}^{(2)}=\mathfrak{B}^{(2)}$})\\
                                                                     &=  \mu( D \circ  \iota_{\mathfrak{B}} (X) , Y) && (\text{since $X \in \mathfrak{B}$})\\
                                                                     &=  \mu(\pi_{\mathfrak{B}} \circ D \circ  \iota_{\mathfrak{B}} (X)  + \pi_{\mathcal{Z}(\mathfrak{A}) } \circ D \circ  \iota_{\mathfrak{B}} (X),Y) && (\text{since }V=\mathfrak{B} \oplus \mathcal{Z}(\mathfrak{A})) \\
                                                                     &= \mu(\pi_{\mathfrak{B}} \circ D \circ  \iota_{\mathfrak{B}} (X) ,Y) && (\pi_{\mathcal{Z}(\mathfrak{A}) } \circ D \circ  \iota_{\mathfrak{B}} (X) \in \mathcal{Z}(\mathfrak{A}))
 \end{align*}
Now, we note that $\varphi$ passes to the quotient $\mathcal{D}(t,1,0)(\mathfrak{A}) / \Omega(\mathfrak{A})$, yielding a linear transformation $\widetilde{\varphi}: \mathcal{D}(t,1,0)(\mathfrak{A}) / \Omega(\mathfrak{A}) \rightarrow \mathcal{D}(t,1,0)(\mathfrak{B})$.  To see this, let $D_1$ and $D_2$ in $\mathcal{D}(t,1,0)(\mathfrak{A})$ such that $D_1 - D_2 \in \Omega(\mathfrak{A})$, we need to show that $\pi_{\mathfrak{B}} \circ D_1 \circ \iota_{\mathfrak{B}}$ is equal to $\pi_{\mathfrak{B}} \circ D_2 \circ \iota_{\mathfrak{B}} $.  For this purpose, let $X \in \mathfrak{B}$. We have $\pi_{\mathfrak{B}} \circ (D_1 - D_2 )\circ \iota_{\mathfrak{B}} (X)$
is equal to $\pi_{\mathfrak{B}} \circ (D_1 - D_2 )(X)$ and since $\operatorname{Im}(D_1-D2) \subseteq \mathcal{Z}(\mathfrak{A})$, it follows $\pi_{\mathfrak{B}} \circ (D_1 - D_2 )\circ \iota_{\mathfrak{B}} (X) = 0$.

Next we show that $\widetilde{\varphi}$ is actually an isomorphism of vector spaces. To prove that it is injective, suppose $D \in \mathcal{D}(t,1,0)(\mathfrak{A})$ is such that $ \pi_{\mathfrak{B}} \circ D \circ \iota_{\mathfrak{B}} = 0$ and let us prove $D \in \Omega(\mathfrak{A})$. We show first that $DX \in \mathcal{Z}(\mathfrak{A})$ for any $X \in \mathfrak{A}$. Let $Y \in \mathfrak{A}$ be arbitrary,
 \begin{align*}
   \mu(DX,Y) &= tD\mu(X,Y) \\
             &= t\pi_{\mathfrak{B}}\circ D (\mu(X,Y)) && (\text{since $D$ preserves $\mathfrak{A}^{(2)}$, and $\mathfrak{A}^{(2)}=\mathfrak{B}^{(2)}$})\\
             &= t\pi_{\mathfrak{B}}\circ D \circ \iota_{\mathfrak{B}} (\mu(X,Y)) && (\text{again, since $\mathfrak{A}^{(2)}=\mathfrak{B}^{(2)}$})\\
             &= 0.
 \end{align*}
It remains to check $\mathfrak{A}^{(2)} \subseteq \operatorname{Ker}(D)$, which also follows from the preceding calculation, since $t\neq0$.

To show surjectivity, let $\widehat{D}$ be a $(t,1,0)$-derivation of $\mathfrak{B}$ and consider $D= \iota_{\mathfrak{B}}\circ \widehat{D} \oplus \Theta_{\mathcal{Z}(\mathfrak{A});\mathfrak{A}}$ the unique linear transformation from $\mathfrak{A}$ to $\mathfrak{A}$ such that $D \circ \iota_{\mathfrak{B}} = \iota_{\mathfrak{B}}\circ \widehat{D}$ and $D \circ \iota_{\mathcal{Z}(\mathfrak{A})} = \Theta_{\mathcal{Z}(\mathfrak{A});\mathfrak{A}}$ (\textit{Characteristic property of the direct sum}). We want to check that $D$ is a $(t,1,0)$-derivation of $\mathfrak{A}$. Let $X, Y \in \mathfrak{A}$ be arbitrary:
 \begin{align*}
   \mu(DX,Y) &= \mu(D\circ \iota_{\mathfrak{B}} \circ \pi_{\mathfrak{B}}( X),Y) + \mu(D\circ \iota_{\mathcal{Z}(\mathfrak{A})} \circ \pi_{\mathcal{Z}(\mathfrak{A})}( X),Y)
             &&  \\
             &= \mu( \iota_{\mathfrak{B}} \circ \widehat{D} \circ \pi_{\mathfrak{B}}( X), Y) + \mu(\Theta_{\mathcal{Z}(\mathfrak{A});\mathfrak{A}} \circ \pi_{\mathcal{Z}(\mathfrak{A})}( X),Y)
             && (\text{by definition of $D$}) \\
             &=  \mu( \iota_{\mathfrak{B}} \circ \widehat{D} \circ \pi_{\mathfrak{B}}( X), Y) && \\
             & = \mu( \iota_{\mathfrak{B}} \circ  \widehat{D} \circ \pi_{\mathfrak{B}}( X), \pi_{\mathfrak{B}}(Y) ) + \mu( \iota_{\mathfrak{B}} \circ  \widehat{D} \circ \pi_{\mathfrak{B}}( X), \pi_{\mathcal{Z}(\mathfrak{A})}(Y) ) &&  (\text{since } V=\mathfrak{B} \oplus \mathcal{Z}(\mathfrak{A})) \\
             &= \mu(  \widehat{D} \circ \pi_{\mathfrak{B}}( X), \pi_{\mathfrak{B}}(Y) ) && (\text{since $\widehat{D} \circ \pi_{\mathfrak{B}}( X) \in \mathfrak{B}$})\\
             & = t\widehat{D}\mu(\pi_{\mathfrak{B}}( X) , \pi_{\mathfrak{B}}(Y))
             && (\text{since $\widehat{D} \in \mathcal{D}(t,1,0)(\mathfrak{B}) $}  )\\
             & = t\iota_{\mathfrak{B}} \circ \widehat{D} (\mu(\pi_{\mathfrak{B}}( X) , \pi_{\mathfrak{B}}(Y)))
             && (\text{since $\operatorname{Im}\widehat{D} \subseteq \mathfrak{B}$})\\
             & = t D \circ \iota_{\mathfrak{B}}(\mu(\pi_{\mathfrak{B}}( X) , \pi_{\mathfrak{B}}(Y)))
             && (\text{by definition of $D$})\\
             & = t D \mu(\pi_{\mathfrak{B}}( X) , \pi_{\mathfrak{B}}(Y)) && (\text{since $\mathfrak{B}$ is subalgebra of $\mathfrak{A}$})\\
             & = t D \mu(\pi_{\mathfrak{B}}( X) + \pi_{ \mathcal{Z}(\mathfrak{A}) }( X) , \pi_{\mathfrak{B}}(Y) +\pi_{ \mathcal{Z}(\mathfrak{A})}( Y) )&& \\
             & = t D \mu(X,Y).
 \end{align*}
Since $\varphi(D) = \pi_{\mathfrak{B}} \circ D \circ \iota_{\mathfrak{B}} = \pi_{\mathfrak{B}} \circ \iota_{\mathfrak{B}} \circ \widehat{D} = \widehat{D}$, we have $\widetilde{\varphi}([D]) = \widehat{D}$; where $[D]$ stands for the equivalence class of $D$ in $\mathcal{D}(t,1,0)(\mathfrak{A}) / \Omega(\mathfrak{A})$.
\end{proof}

Although the Lemma \ref{lematecnico} can be improved, it is quite sufficient to prove one of our main results.

\begin{theorem}
  For all $n \in \mathbb{N}$ with $n \geq 4$, the familiy $\{\phi_{n,t}\}_{t \in \mathbb{K}}$ is infinite
\end{theorem}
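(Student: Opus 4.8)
The plan is to bootstrap from the four-dimensional family of Proposition \ref{dim4} by padding each $\mathfrak{A}_s$ with a purely abelian summand and transporting the dimension count through Lemma \ref{lematecnico}. For $s \in \mathbb{K}$ I would set $\mathfrak{C}_s := \mathfrak{A}_s \times \mathbb{K}^{n-4}$, the algebra direct sum in which $\mathbb{K}^{n-4}$ carries the zero product and all cross products between the two factors vanish (for $n=4$ this is just $\mathfrak{A}_s$). Since we only need infinitely many distinct functions, it will suffice to show that $t \mapsto \phi_{n,t}$ is injective on $\mathbb{K} \setminus \{0,1\}$, a set which is infinite because $\operatorname{char}\mathbb{K}=0$.

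The first task is to check the hypotheses of Lemma \ref{lematecnico} with $\mathfrak{A} = \mathfrak{C}_s$ and $\mathfrak{B} = \mathfrak{A}_s$ for $s \neq 0$. Here $\mathfrak{A}_s$ is a subalgebra, and the proof of Proposition \ref{dim4} shows that $\mathfrak{A}_s$ is centerless when $s \neq 0$; hence $\mathcal{Z}(\mathfrak{C}_s) = \mathcal{Z}(\mathfrak{A}_s) \times \mathbb{K}^{n-4} = \mathbb{K}^{n-4}$, so that $\mathfrak{C}_s = \mathfrak{A}_s \oplus \mathcal{Z}(\mathfrak{C}_s)$. Lemma \ref{lematecnico} then gives, for $t \neq 0$,
\[
\mathcal{D}(t,1,0)(\mathfrak{C}_s) \cong \mathcal{D}(t,1,0)(\mathfrak{A}_s) \times L\bigl(\mathfrak{C}_s / \mathfrak{C}_s^{(2)}; \mathcal{Z}(\mathfrak{C}_s)\bigr).
\]
As $\mathfrak{C}_s^{(2)} = \mathfrak{A}_s^{(2)} = \operatorname{span}\{e_1,e_2\}$ has dimension $2$ and $\mathcal{Z}(\mathfrak{C}_s) = \mathbb{K}^{n-4}$, the second factor has dimension $(n-2)(n-4)$, independently of both $s$ and $t$. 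Thus for $s \neq 0$ and $t \neq 0,1$ I obtain the clean formula
\[
\phi_{n,t}(\mathfrak{C}_s) = \phi_{4,t}(\mathfrak{A}_s) + (n-2)(n-4).
\]

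It then remains to read off the values from Proposition \ref{dim4}. Fix $t_1 \neq t_2$ with $t_1, t_2 \neq 0,1$ and evaluate both functions at $\mathfrak{C}_{t_1}$ (note $t_1 \neq 0$). Since $\phi_{4,t_1}(\mathfrak{A}_{t_1}) = 1$, the formula gives $\phi_{n,t_1}(\mathfrak{C}_{t_1}) = 1 + (n-2)(n-4)$; while $\phi_{4,t_2}(\mathfrak{A}_{t_1}) = 0$ (because $t_1 \neq t_2$ and $t_1 \neq 0$ exclude the first two cases of Proposition \ref{dim4}), giving $\phi_{n,t_2}(\mathfrak{C}_{t_1}) = (n-2)(n-4)$. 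These disagree, so $\phi_{n,t_1} \neq \phi_{n,t_2}$, establishing injectivity and hence the infinitude of the family.

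The only genuine obstacle is the reduction step: ensuring that the enlarged algebra really satisfies the hypotheses of Lemma \ref{lematecnico}. This rests entirely on the identification $\mathcal{Z}(\mathfrak{C}_s) = \mathbb{K}^{n-4}$, which requires $\mathfrak{A}_s$ to be centerless; this holds precisely for $s \neq 0$, and that is exactly why I would steer clear of the Lie algebra $\mathfrak{A}_0$ and run the entire argument at the parameter value $s = t_1 \neq 0$. Everything after that is bookkeeping of dimensions together with a direct appeal to the two results already proved.
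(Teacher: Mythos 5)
Your proposal is correct and follows essentially the same route as the paper: pad $\mathfrak{A}_s$ with an abelian factor $\mathbb{K}^{n-4}$, apply Lemma \ref{lematecnico} (using that $\mathfrak{A}_s$ is centerless for $s\neq 0$, so the center of the product is exactly $\mathbb{K}^{n-4}$), and separate $\phi_{n,t_1}$ from $\phi_{n,t_2}$ by evaluating at the parameter $s=t_1$ via Proposition \ref{dim4}. Your dimension count $(n-2)(n-4)$ agrees with the paper's $(m+2)m$ for $m=n-4$, and your explicit verification of the hypotheses of the lemma is a welcome touch the paper leaves implicit.
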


\begin{proof}
Let $m \in \mathbb{N}$ and let $n=4+m$. Consider the one-parameter family of algebras $\mathfrak{A}_{s} \times \mathbb{K}^{m}$, with $s \in \mathbb{K}\setminus\{0\}$, where $\mathfrak{A}_{s}$ is the four-dimensional $\mathbb{K}$-algebra defined in  Propoposition \ref{dim4}. Let $t \in \mathbb{K}$, $t \neq 0,1$. By the preceding lemma, $\mathcal{D}(t,1,0)(\mathfrak{A}_{s} \times \mathbb{K}^{m})$ is isomorphic to $\mathcal{D}(t,1,0)(\mathfrak{A}_{s}) \times L( \operatorname{span}\{e_3,e_4\}\oplus \mathbb{K}^{m};  \mathbb{K}^{m} )$ (since $s\neq 0$). And so, $\phi_{n,t}(\mathfrak{A}_{t} \times \mathbb{K}^{m})= 1 + (m+2)m$ and if $s \neq t$, $\phi_{n,t}(\mathfrak{A}_{s} \times \mathbb{K}^{m})= 0 + (m+2)m$, which implies $\phi_{n,t} \neq \phi_{n,u}$ if $t \neq u$ (with $t,u \in \mathbb{K} \setminus \{0,1\}$).
\end{proof}

\begin{remark}
It is a simple matter to see that the preceding theorem does not hold with $n=3$. Let $\mathfrak{A}$ be an anti-commutative algebra of dimension $3$ over $\mathbb{K}$ and let $t \in \mathbb{K}\setminus\{0,1\}$. If $\mathfrak{A}$ is centerless, then one can see that $\mathcal{D}(t,1,0)(\mathfrak{A}) = \{0\}$. If $\mathcal{Z}(\mathfrak{A})\neq\{0\}$, then $\mathfrak{A}$ is a Lie algebra which is isomorphic to either $\mathbb{K}^3$, $\mathfrak{aff}(\mathbb{K})\times \mathbb{K}$ or the Heisenberg Lie algebra $\mathfrak{h}_{3}(\mathbb{K})$, and
an easy computation shows that $\phi_{3,t}(\mathfrak{h}_{3}(\mathbb{K}))=3$ and $\phi_{3,t}(\mathfrak{aff}(\mathbb{K})\times \mathbb{K})=2$.
\end{remark}

\section{The family $\{\widehat{\phi}_{n,t}\}_{t \in \mathbb{K}}$ is finite and has three elements}$ $

\begin{definition}
  An anti-commutative $\mathbb{K}$-algebra $\mathfrak{g}=(V,\mu)$ is called \textit{Lie algebra} if it satisfies the \textit{Jacobi Identity}: for all $X,Y,Z \in V$
  $$
  \mu(X, \mu(Y,Z)) + \mu(Y, \mu(Z,X)) + \mu(Z, \mu(X,Y))=0.
  $$
  We denote by $\mathcal{L}(V)$ to the \textit{algebraic subset} of $C^{2}(V;V)$ defined by
  $$
  \mathcal{L}(V)=\{\lambda \in C^{2}(V;V) : (V,\lambda) \mbox{ is a Lie algebra}\},
  $$
  which is usually called the \textit{variety of Lie algebras on $V$}.
\end{definition}

We start by proving an interesting algebraic identity which is satisfied by $(t,1,0)$-derivations of Lie algebras, with $t\neq 0,1$.

\begin{lemma}\label{lematecnico2}
 Let $t \in \mathbb{K} \setminus\{0,1\}$ and let $\mathfrak{g}=(V,\mu)$ be a Lie algebra. If $D$ is a $(t,1,0)$-derivation of $\mathfrak{g}$ then for all $X,Y,Z \in \mathfrak{g}$
 $$
 D\mu(X,\mu(Y,Z))= 0
 $$
\end{lemma}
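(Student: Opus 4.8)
The plan is to evaluate the single quantity $\mu(DX,\mu(Y,Z))$ in two different ways and to compare the results. On the one hand, applying the defining identity of a $(t,1,0)$-derivation to the pair $(X,\mu(Y,Z))$ gives immediately
\[
\mu(DX,\mu(Y,Z)) = tD\mu(X,\mu(Y,Z)).
\]
On the other hand, I would expand the very same expression ``from the inside'': first split the outer bracket by the Jacobi identity, then push $D$ back to the outside by repeated use of the derivation identity together with its anti-commutative companion $\mu(X,DY)=tD\mu(X,Y)$ recorded in the second remark. Each time $D$ is moved across a product a factor $t$ is produced.

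Concretely, the Jacobi identity (in the form $\mu(A,\mu(B,C))=\mu(\mu(A,B),C)+\mu(B,\mu(A,C))$) applied with $A=DX$ yields
\[
\mu(DX,\mu(Y,Z)) = \mu(\mu(DX,Y),Z) + \mu(Y,\mu(DX,Z)).
\]
Substituting $\mu(DX,Y)=tD\mu(X,Y)$ and $\mu(DX,Z)=tD\mu(X,Z)$, and then applying the derivation identity a second time in each summand, namely $\mu(D\mu(X,Y),Z)=tD\mu(\mu(X,Y),Z)$ and $\mu(Y,D\mu(X,Z))=tD\mu(Y,\mu(X,Z))$, collects a total factor $t^2$ and gives
\[
\mu(DX,\mu(Y,Z)) = t^2 D\left(\mu(\mu(X,Y),Z)+\mu(Y,\mu(X,Z))\right).
\]
By the Jacobi identity once more the inner sum equals $\mu(X,\mu(Y,Z))$, so this second computation evaluates to $t^2 D\mu(X,\mu(Y,Z))$.

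Comparing the two evaluations gives $tD\mu(X,\mu(Y,Z)) = t^2 D\mu(X,\mu(Y,Z))$, that is $t(t-1)\,D\mu(X,\mu(Y,Z))=0$, and since $t\neq 0,1$ the conclusion $D\mu(X,\mu(Y,Z))=0$ follows. I do not expect a genuine obstacle: the argument is a short symbolic manipulation. The delicate points are purely bookkeeping — one must invoke both the $(t,1,0)$ identity and its anti-commutative dual in the correct slots, track signs carefully when writing the Jacobi identity in its ``derivation form'', and make sure the factor $t$ is extracted exactly twice, so that the two sides differ precisely by $t$ versus $t^2$. The observation that makes everything collapse is that the same Jacobi recombination used to break up $\mu(DX,\mu(Y,Z))$ reassembles $\mu(X,\mu(Y,Z))$ at the very end.
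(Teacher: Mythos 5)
Your proof is correct and takes essentially the same route as the paper: both compute a triple product in two ways using the $(t,1,0)$ identity together with its anti-commutative dual $\mu(X,DY)=tD\mu(X,Y)$ and two applications of the Jacobi identity, and then conclude from the resulting mismatch of a factor $t$ versus $t^{2}$ that $D\mu(X,\mu(Y,Z))$ must vanish since $t\neq 0,1$. The paper merely organizes the computation in the opposite direction, starting from $D\mu(Z,\mu(X,Y))$ and pulling $D$ inward, but the mechanism is identical.
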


\begin{proof}
Let $X,Y,Z \in \mathfrak{g}$ be artbitrary
 \begin{align*}
 D\mu(Z,\mu(X,Y)) &= \frac{1}{t}\mu(DZ,\mu(X,Y))&&  (\text{since ${D} \in \mathcal{D}(t,1,0)(\mathfrak{g}) $}  ) \\
                  &= \frac{1}{t}\mu(Z,D\mu(X,Y))&&  (\text{since ${D} \in \mathcal{D}(0,1,-1)(\mathfrak{g}) $}  )\\
                  &= \frac{1}{t}\mu(Z,\frac{1}{t}\mu(DX,Y))&&  (\text{since ${D} \in \mathcal{D}(t,1,0)(\mathfrak{g}) $}  )\\
                  &= -\frac{1}{t^2}\mu(DX,\mu(Y,Z))-\frac{1}{t^2}\mu(Y,\mu(Z,DX)) && (\text{by Jacobi Identity}) \\
                  &= -\frac{t}{t^2}D\mu(X,\mu(Y,Z))-\frac{t^2}{t^2}D\mu(Y,\mu(Z,X)) && (\text{since ${D} \in \mathcal{D}(t,1,0)(\mathfrak{g}) \subseteq \mathcal{D}(0,1,-1)(\mathfrak{g}) $}  ).
\end{align*}

The last equality implies $-\frac{1}{t}D\mu(X,\mu(Y,Z))$ is equal to $D\mu(Z,\mu(X,Y))+D\mu(Y,\mu(Z,X))= -D\mu(X,\mu(Y,Z))$ (by linearity and Jacobi Identity again), and since $t\neq 1$, it follows that $D\mu(X,\mu(Y,Z))$ must be $0$.

\end{proof}
The next theorem follows immediately from the preceding lemma.

\begin{theorem}
Let $t\in \mathbb{K}\setminus\{0,1\}$ and let $\mathfrak{g}$ be a \textit{perfect} Lie algebra over $\mathbb{K}$; i.e. $\mathfrak{g}^{(2)}=\mathfrak{g}$. Then $\mathcal{D}(t,1,0)(\mathfrak{g})=\{0\}$.
\end{theorem}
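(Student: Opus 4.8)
The plan is to derive this directly from Lemma \ref{lematecnico2}, with essentially no extra work beyond an elementary spanning argument. I would start by taking an arbitrary $D \in \mathcal{D}(t,1,0)(\mathfrak{g})$ and aim to show that $D$ is the zero map, i.e. that $DW = 0$ for every $W \in \mathfrak{g}$.

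First I would invoke Lemma \ref{lematecnico2}, which gives $D\mu(X,\mu(Y,Z)) = 0$ for all $X,Y,Z \in \mathfrak{g}$. The point is to identify the subspace on which this forces $D$ to vanish. By bilinearity of $\mu$ and the definition of the derived algebra, the elements of the form $\mu(X,\mu(Y,Z))$ generate the linear span $\mu(\mathfrak{g},\mathfrak{g}^{(2)})$, since $\mathfrak{g}^{(2)}$ is itself the span of all products $\mu(Y,Z)$.

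The key step is to feed in the perfectness hypothesis. Because $\mathfrak{g}^{(2)} = \mathfrak{g}$, the span $\mu(\mathfrak{g},\mathfrak{g}^{(2)})$ coincides with $\mu(\mathfrak{g},\mathfrak{g})$, whose linear span is again $\mathfrak{g}^{(2)} = \mathfrak{g}$. Thus the triple products $\mu(X,\mu(Y,Z))$ span the whole algebra $\mathfrak{g}$, not merely a proper ideal. Since $D$ is linear and annihilates each such generator, it annihilates their span, so $DW = 0$ for all $W \in \mathfrak{g}$ and hence $D = 0$. As $D$ was arbitrary, this yields $\mathcal{D}(t,1,0)(\mathfrak{g}) = \{0\}$.

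I do not expect any genuine obstacle: all the analytic content lives in Lemma \ref{lematecnico2}, and the only subtlety worth stating carefully is the bootstrapping observation that perfectness upgrades ``$D$ vanishes on $\mu(\mathfrak{g},\mathfrak{g}^{(2)})$'' to ``$D$ vanishes on $\mathfrak{g}$''. This is exactly the step where the hypothesis $t \neq 0,1$ is used, since it is that restriction which makes Lemma \ref{lematecnico2} available in the first place.
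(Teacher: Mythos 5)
Your proposal is correct and is precisely the argument the paper intends: the paper states the theorem ``follows immediately from the preceding lemma'' (Lemma \ref{lematecnico2}), and your spanning argument --- that perfectness makes the triple products $\mu(X,\mu(Y,Z))$ span all of $\mathfrak{g}$, so $D$ vanishes everywhere --- is exactly the omitted routine step. The only nitpick is the closing sentence: the hypothesis $t\neq 0,1$ is consumed entirely inside Lemma \ref{lematecnico2}, not in the bootstrapping step itself.
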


\begin{remark}
  Note that we can obtain other interesting observations from Lemma \ref{lematecnico2}, for instance, any $(t,1,0)$-derivation of a Lie algebra $\mathfrak{g}$ sends $\mathfrak{g}^{(2)}$ to $\mathfrak{g}^{(2)} \cap \mathcal{Z}(\mathfrak{g})$ (if $t\neq 0,1$).
\end{remark}

Now, we focus our attention on non-perfect Lie algebras.

\begin{theorem}
  Let $t,s\in \mathbb{K}\setminus\{0,1\}$ and let $\mathfrak{g}$ be a non-perfect Lie algebra over $\mathbb{K}$. Then $\mathcal{D}(t,1,0)(\mathfrak{g})$ and $\mathcal{D}(s,1,0)(\mathfrak{g})$ are
  isomorphic vector spaces.
\end{theorem}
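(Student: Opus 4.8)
The plan is to prove that $\dim\mathcal{D}(t,1,0)(\mathfrak{g})$ is independent of $t\in\mathbb{K}\setminus\{0,1\}$; since two finite-dimensional vector spaces are isomorphic exactly when they have equal dimension, this settles the statement (and in fact yields an explicit isomorphism). First I would record the structural consequences of $t\neq0,1$. Writing $\mathfrak{g}^{3}$ for the linear span of $\mu(V,\mathfrak{g}^{(2)})$, Lemma \ref{lematecnico2} gives $\mathfrak{g}^{3}\subseteq\operatorname{Ker}(D)$ for every $D\in\mathcal{D}(t,1,0)(\mathfrak{g})$, and the remark following it gives $D(\mathfrak{g}^{(2)})\subseteq\mathfrak{g}^{(2)}\cap\mathcal{Z}(\mathfrak{g})$. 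The extra fact I would extract is that $\operatorname{Im}(D)\subseteq\mathcal{C}_{\mathfrak{g}}(\mathfrak{g}^{(2)})$: for $X\in\mathfrak{g}$ and $Y\in\mathfrak{g}^{(2)}$ we have $\mu(DX,Y)=tD\mu(X,Y)$ and $\mu(X,Y)\in\mathfrak{g}^{3}\subseteq\operatorname{Ker}(D)$, so $\mu(DX,Y)=0$. Crucially, every one of these facts is uniform in $t$.

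Next, fix a complement $W$ with $V=W\oplus\mathfrak{g}^{(2)}$ and consider the restriction map $\rho_{t}\colon\mathcal{D}(t,1,0)(\mathfrak{g})\to L(W;V)$, $D\mapsto D\circ\iota_{W}$. I claim $\rho_{t}$ is injective. Indeed, for arbitrary $X,Y\in V$ the mixed terms in $\mu(X,Y)$ involving a factor in $\mathfrak{g}^{(2)}$ lie in $\mathfrak{g}^{3}$, so $\mathfrak{g}^{(2)}$ coincides modulo $\mathfrak{g}^{3}$ with the span of the brackets $\mu(X_0,Y_0)$ with $X_0,Y_0\in W$. Since $D$ kills $\mathfrak{g}^{3}$, the relation $D\mu(X_0,Y_0)=\frac1t\mu(DX_0,Y_0)$ shows that $D|_{\mathfrak{g}^{(2)}}$ is completely determined by $D|_{W}$; as $V=W\oplus\mathfrak{g}^{(2)}$, so is $D$ itself.

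The heart of the argument is to show that $\operatorname{Im}(\rho_{t})$ is one and the same subspace of $L(W;V)$ for every $t\neq0,1$. I would characterize it by three conditions on $\psi\in L(W;V)$: (S1) $\operatorname{Im}(\psi)\subseteq\mathcal{C}_{\mathfrak{g}}(\mathfrak{g}^{(2)})$; (S2) whenever $\sum_k\mu(X_k,Y_k)\in\mathfrak{g}^{3}$ with $X_k,Y_k\in W$, one has $\sum_k\mu(\psi X_k,Y_k)=0$; and (S3) $\mu(\psi X_0,Y_0)\in\mathcal{Z}(\mathfrak{g})$ for all $X_0,Y_0\in W$. Each condition is manifestly free of $t$, because $t$ enters only through the global scalar $\frac1t$, which affects neither a vanishing condition nor membership in a subspace. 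Necessity of (S1)--(S3) is immediate from the structural facts above. For sufficiency I would reconstruct $D$ from $\psi$ by setting $D|_{W}=\psi$ and declaring $D$ on $\mathfrak{g}^{(2)}$ to be the linear map that annihilates $\mathfrak{g}^{3}$ and sends the class of $\mu(X_0,Y_0)$ to $\frac1t\mu(\psi X_0,Y_0)$; condition (S2) makes this well defined. The derivation identity $\mu(DX,Y)=tD\mu(X,Y)$ is then verified by splitting $X=X_0+X'$, $Y=Y_0+Y'$ with $X_0,Y_0\in W$ and $X',Y'\in\mathfrak{g}^{(2)}$: the mixed contributions either land in $\mathfrak{g}^{3}$ (killed by $D$) or are annihilated by (S1) and (S3), leaving precisely the relation forced on the $W$-part. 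Hence $\operatorname{Im}(\rho_{t})=\{\psi\in L(W;V):\psi\text{ satisfies (S1)--(S3)}\}$ is independent of $t$, and injectivity of $\rho_{t}$ gives $\dim\mathcal{D}(t,1,0)(\mathfrak{g})=\dim\operatorname{Im}(\rho_{t})$ for all $t\neq0,1$; composing $\rho_{s}^{-1}\circ\rho_{t}$ furnishes the isomorphism $\mathcal{D}(t,1,0)(\mathfrak{g})\cong\mathcal{D}(s,1,0)(\mathfrak{g})$.

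The main obstacle I anticipate is exactly that $\mathfrak{g}$ carries only a filtration $\mathfrak{g}\supseteq\mathfrak{g}^{(2)}\supseteq\mathfrak{g}^{3}$, not a grading, so one cannot merely conjugate by a degree-rescaling automorphism to pass from parameter $t$ to parameter $s$ — such a conjugation would alter the product because of its lower-order parts. The resolution is the observation that, once a derivation is reduced to its generating datum $\psi=D|_{W}$, the parameter survives only as the innocuous factor $\frac1t$ in the reconstruction of $D|_{\mathfrak{g}^{(2)}}$. Accordingly, the genuinely substantive step is the sufficiency verification: confirming that every $\psi$ meeting the $t$-free conditions (S1)--(S3) does extend to a bona fide $(t,1,0)$-derivation, which is where the centralizer and $\mathfrak{g}^{3}$-annihilation facts must be used carefully across all pairs $X,Y\in V$.
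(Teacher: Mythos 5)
Your argument is correct, and it rests on the same two pillars as the paper's proof --- the decomposition $V=W\oplus\mathfrak{g}^{(2)}$ and Lemma \ref{lematecnico2} (which forces every $(t,1,0)$-derivation to kill $\mu(V,\mathfrak{g}^{(2)})$ and to send $\mathfrak{g}^{(2)}$ into the center) --- but it is packaged quite differently. The paper writes down the isomorphism in one stroke: with $\mathfrak{a}$ a complement of $\mathfrak{g}^{(2)}$, the map $D\mapsto \tfrac{s}{t}D\circ\iota_{\mathfrak{a}}\oplus D\circ\iota_{\mathfrak{g}^{(2)}}$ is checked directly to land in $\mathcal{D}(s,1,0)(\mathfrak{g})$, with the obvious rescaling as inverse; no image characterization or reconstruction is needed, because one never leaves the space of derivations. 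You instead prove that the restriction $\rho_t\colon D\mapsto D|_W$ is injective and that its image is cut out by the $t$-free conditions (S1)--(S3), then take $\rho_s^{-1}\circ\rho_t$. Unwinding your construction, $\rho_s^{-1}\circ\rho_t(D)$ has $W$-part $D|_W$ and $\mathfrak{g}^{(2)}$-part $\tfrac{t}{s}D|_{\mathfrak{g}^{(2)}}$, which is exactly $\tfrac{t}{s}$ times the paper's map, so the two isomorphisms agree up to a global scalar. The price of your route is the extra verification burden (well-definedness of the reconstructed $D|_{\mathfrak{g}^{(2)}}$ via (S2), and the case analysis over all pairs $X,Y$), all of which you handle correctly; what it buys is an explicit, $t$-independent parametrization of $\mathcal{D}(t,1,0)(\mathfrak{g})$ as a subspace of $L(W;V)$, which is more information than the bare isomorphism and sits naturally alongside the dimension bounds of Theorem \ref{bounds}. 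One small point worth making explicit if you write this up: antisymmetry of $(X_0,Y_0)\mapsto\mu(\psi X_0,Y_0)$, needed for the reconstruction, is itself an instance of (S2) applied to $\mu(X_0,Y_0)+\mu(Y_0,X_0)=0$.
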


\begin{proof}
  Since $\mathfrak{g}$ is non-perfect, let $\mathfrak{a}$ be a complementary subspace of $\mathfrak{g}^{(2)}$ in $\mathfrak{g}$; i.e. $\mathfrak{g} = \mathfrak{a} \oplus \mathfrak{g}^{(2)}$. Given $D \in \mathcal{D}(t,1,0)(\mathfrak{g})$, consider the linear transformations  $\frac{s}{t}D\circ \iota_{\mathfrak{g}^{(2)}}$ and $D\circ \iota_{\mathfrak{a}}$, and let $\widehat{D}=\frac{s}{t}D\circ \iota_{\mathfrak{a}} \oplus D\circ \iota_{\mathfrak{g}^{(2)}}$ be the unique linear transformation
  in $L(\mathfrak{g},\mathfrak{g})$ such that $\widehat{D}\circ \iota_{\mathfrak{a}} = \frac{s}{t}D\circ \iota_{\mathfrak{a}}$ and $\widehat{D}\circ \iota_{\mathfrak{g}^{(2)}} =D\circ \iota_{\mathfrak{g}^{(2)}}$.
  We will show that $\widehat{D}$ is a $(s,1,0)$-derivation of $\mathfrak{g}$. Let $X,Y \in \mathfrak{g}$ and suppose first that $X \in \mathfrak{a}$:
 \begin{align*}
   \mu(\widehat{D}X,Y) &= \mu(\widehat{D}\circ \iota_{\mathfrak{a}}X,Y) && \\
                       &= \mu\left(\frac{s}{t} D \circ  \iota_{\mathfrak{a}}X,Y\right) && (\text{by definition of $\widehat{D}$})\\
                       &=\frac{s}{t}tD  \mu(X,Y) && (\text{since $X \in \mathfrak{a}$ and $D\in \mathcal{D}(t,1,0)(\mathfrak{g})$ })\\
                       &= s D \circ \iota_{\mathfrak{g}^{(2)}}\mu(X,Y) && \\
                       &= s \widehat{D}\circ \iota_{\mathfrak{g}^{(2)}} \mu(X,Y) && (\text{by definition of $\widehat{D}$})\\
                       &= s \widehat{D} \mu(X,Y).
 \end{align*}
And if $X \in \mathfrak{g}^{(2)}$, we have
 \begin{align*}
    \mu(\widehat{D}X,Y) &= \mu(\widehat{D}\circ \iota_{\mathfrak{g}^{(2)}}X,Y) && \\
                        &= \mu(D\circ \iota_{\mathfrak{g}^{(2)}}X, Y)   && (\text{by definition of $\widehat{D}$})\\
                        &= \mu(DX,Y) && \\
                        &= tD\mu(X,Y) && \\
                        &= 0          && (\text{by Lemma \ref{lematecnico2}, since $X\in \mathfrak{g}^{(2)}$})
 \end{align*}
and
\begin{align*}
  s\widehat{D}\mu(X,Y) &= s \widehat{D} \circ \iota_{\mathfrak{g}^{(2)}}\mu(X,Y) &&\\
                       &= s D \circ \iota_{\mathfrak{g}^{(2)}}\mu(X,Y) && (\text{by definition of $\widehat{D}$})\\
                       &= s D\mu(X,Y) && \\
                       &= 0 && (\text{by Lemma \ref{lematecnico2}, since $X\in \mathfrak{g}^{(2)}$}).
\end{align*}
Thus, $s\widehat{D}\mu(X,Y) = \mu(\widehat{D}X,Y) $ when $X \in  \mathfrak{g}^{(2)}$. It follows by linearity that $\widehat{D}$ is a $(s,1,0)$-derivation of $\mathfrak{g}$. 

And so, the function $\varphi:\mathcal{D}(t,1,0)(\mathfrak{g}) \rightarrow \mathcal{D}(s,1,0)(\mathfrak{g})$ that sends a $D$ to $\widehat{D}$ is well defined and is
an isomorphism of vector spaces. Note that we have not already used the hypothesis that $s\neq 1$, which proves the Theorem \ref{centroid} below.
 
When $s\neq 0, 1$, the function  $\psi:\mathcal{D}(s,1,0)(\mathfrak{g}) \rightarrow \mathcal{D}(t,1,0)(\mathfrak{g})$ given by $\psi(\widehat{D})= \frac{t}{s}\widehat{D}\circ \iota_{\mathfrak{a}} \oplus \widehat{D}\circ \iota_{\mathfrak{g}^{(2)}}$ is the inverse of $\varphi$, which completes the proof.
\end{proof}

The proof of the previous theorem gives us a interesting result about the \textit{Centroid} of a Lie algebra, usually denoted by $\mathcal{C}(\mathfrak{g})$, and which is the space $\mathcal{D}(1,1,0)(\mathfrak{g})$.

\begin{theorem}\label{centroid}
Let $t\in \mathbb{K}\setminus\{0,1\}$ and let $\mathfrak{g}$ be a non-perfect Lie algebra over $\mathbb{K}$. Then $\mathcal{D}(t,1,0)(\mathfrak{g})$  can be embedded into the centroid of $\mathfrak{g}$.
\end{theorem}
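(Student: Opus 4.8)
The plan is to recycle the construction from the proof of the previous theorem, specializing the target exponent to $s=1$. Since $\mathfrak{g}$ is non-perfect, I would fix a complementary subspace $\mathfrak{a}$ with $\mathfrak{g} = \mathfrak{a}\oplus\mathfrak{g}^{(2)}$, and for each $D \in \mathcal{D}(t,1,0)(\mathfrak{g})$ set $\widehat{D} = \frac{1}{t}D\circ\iota_{\mathfrak{a}} \oplus D\circ\iota_{\mathfrak{g}^{(2)}}$, the unique linear map on $\mathfrak{g}$ agreeing with $\frac{1}{t}D$ on $\mathfrak{a}$ and with $D$ on $\mathfrak{g}^{(2)}$. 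Defining $\varphi(D) = \widehat{D}$ then yields a linear map $\varphi: \mathcal{D}(t,1,0)(\mathfrak{g}) \to \mathcal{D}(1,1,0)(\mathfrak{g}) = \mathcal{C}(\mathfrak{g})$.

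The crucial point, already flagged in the remark closing the previous proof, is that the verification that $\widehat{D}$ is an $(s,1,0)$-derivation never invoked $s\neq 1$: the case $X\in\mathfrak{a}$ used only $D\in\mathcal{D}(t,1,0)(\mathfrak{g})$ together with the definition of $\widehat{D}$, while the case $X\in\mathfrak{g}^{(2)}$ reduced both sides to $0$ via Lemma \ref{lematecnico2}. Hence the identical computation, read with $s=1$, shows $\widehat{D}\in\mathcal{C}(\mathfrak{g})$, so $\varphi$ is well defined and indeed lands in the centroid.

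It remains to check that $\varphi$ is injective, which is exactly what an embedding requires. If $\widehat{D}=0$, then its two defining components vanish: $\frac{1}{t}D\circ\iota_{\mathfrak{a}}=0$ forces $D\circ\iota_{\mathfrak{a}}=0$ because $t\neq 0$, and $D\circ\iota_{\mathfrak{g}^{(2)}}=0$ directly. Since $\mathfrak{g}=\mathfrak{a}\oplus\mathfrak{g}^{(2)}$, these two conditions together say that $D$ vanishes on all of $\mathfrak{g}$, i.e. $D=0$. Thus $\varphi$ is an injective linear map embedding $\mathcal{D}(t,1,0)(\mathfrak{g})$ into $\mathcal{C}(\mathfrak{g})$.

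I do not anticipate any genuine obstacle here, since the analytic heart of the argument was carried out in the previous theorem and the only adjustment is the observation that its construction degenerates gracefully at $s=1$. The one subtlety worth emphasizing is \emph{why} one obtains only an embedding rather than an isomorphism: the inverse map $\psi$ from the previous proof was built with the factor $\frac{t}{s}$ and required $s\neq 0,1$, so at $s=1$ there is no reason for $\varphi$ to be surjective onto the (possibly strictly larger) centroid.
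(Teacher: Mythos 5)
Your proposal is correct and is essentially the paper's own argument: the author proves Theorem \ref{centroid} precisely by observing that the construction $D\mapsto \widehat{D}=\frac{s}{t}D\circ\iota_{\mathfrak{a}}\oplus D\circ\iota_{\mathfrak{g}^{(2)}}$ from the preceding proof never uses $s\neq 1$, so it may be read with $s=1$ to land in $\mathcal{C}(\mathfrak{g})=\mathcal{D}(1,1,0)(\mathfrak{g})$. Your explicit injectivity check (and your closing remark on why surjectivity fails at $s=1$) only makes explicit what the paper leaves implicit.
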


The preceding results yield our second objective.

\begin{theorem}
   Let $t,s\in \mathbb{K}\setminus\{0,1\}$ be arbitrary. Then the functions $\widehat{\phi}_{n,t}$ and $\widehat{\phi}_{n,s}$ are equal.
\end{theorem}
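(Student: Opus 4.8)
The plan is to recognize that this final statement is an immediate pointwise consequence of the two preceding theorems, assembled via a dichotomy on whether a given Lie algebra is perfect. Recall that $\widehat{\phi}_{n,t}$ is the restriction of $\phi_{n,t}$ to $\mathcal{L}(V)$, so that $\widehat{\phi}_{n,t}(\mathfrak{g}) = \operatorname{Dim}\mathcal{D}(t,1,0)(\mathfrak{g})$ for every Lie algebra law $\mathfrak{g}=(V,\lambda)\in\mathcal{L}(V)$. Hence it suffices to fix an arbitrary $\mathfrak{g}$ and prove the numerical equality $\operatorname{Dim}\mathcal{D}(t,1,0)(\mathfrak{g}) = \operatorname{Dim}\mathcal{D}(s,1,0)(\mathfrak{g})$; the equality of functions then follows since $\mathfrak{g}$ is arbitrary.

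First I would fix a Lie algebra $\mathfrak{g}$ of dimension $n$ and split into the two exhaustive and mutually exclusive cases. If $\mathfrak{g}$ is perfect (i.e. $\mathfrak{g}^{(2)}=\mathfrak{g}$), then because $t,s\in\mathbb{K}\setminus\{0,1\}$, the theorem on perfect Lie algebras gives $\mathcal{D}(t,1,0)(\mathfrak{g})=\{0\}=\mathcal{D}(s,1,0)(\mathfrak{g})$, so both dimensions equal $0$ and in particular agree. If instead $\mathfrak{g}$ is non-perfect, then the theorem asserting that $\mathcal{D}(t,1,0)(\mathfrak{g})$ and $\mathcal{D}(s,1,0)(\mathfrak{g})$ are isomorphic vector spaces applies directly, and isomorphic vector spaces have the same dimension.

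Combining the two cases yields $\widehat{\phi}_{n,t}(\mathfrak{g}) = \widehat{\phi}_{n,s}(\mathfrak{g})$ for every $\mathfrak{g}\in\mathcal{L}(V)$, which is precisely the assertion that $\widehat{\phi}_{n,t}=\widehat{\phi}_{n,s}$ as functions on the variety of Lie algebra laws. I do not expect a genuine obstacle here: the substantive analysis has already been carried out in the two preceding theorems (the vanishing in the perfect case and the explicit rescaling isomorphism $D\mapsto\widehat{D}$ in the non-perfect case), and the only thing this proof contributes is the observation that the perfect/non-perfect dichotomy partitions all of $\mathcal{L}(V)$, so that the two results together cover every Lie algebra of dimension $n$.
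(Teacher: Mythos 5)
Your proof is correct and is exactly the argument the paper intends: the theorem is stated there with no written proof beyond ``the preceding results yield our second objective,'' and those preceding results are precisely the vanishing of $\mathcal{D}(t,1,0)(\mathfrak{g})$ for perfect $\mathfrak{g}$ and the isomorphism $\mathcal{D}(t,1,0)(\mathfrak{g})\cong\mathcal{D}(s,1,0)(\mathfrak{g})$ for non-perfect $\mathfrak{g}$, combined via the same perfect/non-perfect dichotomy you use. Your write-up simply makes the implicit assembly explicit.
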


\begin{remark}
Because of this theorem, the family $\{\widehat{\phi}_{n,t}\}_{t \in \mathbb{K}}$ ``collapses'' to $\{\widehat{\phi}_{n,-1},\widehat{\phi}_{n,0},\widehat{\phi}_{n,1}\}$, and it is easy to check that these three functions are different from each other by evaluating the functions at a Lie algebra law. Consider the Lie algebra $\mathfrak{g}:=\mathfrak{aff}(\mathbb{K})\times \mathbb{K}^{n}$ (with $n\in \mathbb{N})$). It is clear that $\mathcal{D}(0,1,0)(\mathfrak{g})$ is isomorphic to $L(\mathbb{K}^2\times\mathbb{K}^{n},\mathbb{K}^{n})$, and by Lemma \ref{lematecnico} we have $\mathcal{D}(1,1,0)(\mathfrak{g})$ is isomorphic to $\mathcal{D}(1,1,0)(\mathfrak{aff}(\mathbb{K}))\oplus L(\mathbb{K}\times \mathbb{K}^{n},\mathbb{K}^{n})$ and $\mathcal{D}(-1,1,0)(\mathfrak{g})$ is isomorphic to $\mathcal{D}(-1,1,0)(\mathfrak{aff}(\mathbb{K}))\oplus L(\mathbb{K}\times \mathbb{K}^{n},\mathbb{K}^{n})$. Since $\mathcal{D}(1,1,0)(\mathfrak{aff}(\mathbb{K})) = \operatorname{span}\{\operatorname{Id}_{\mathfrak{aff}(\mathbb{K})}\}$ and $\mathcal{D}(-1,1,0)(\mathfrak{aff}(\mathbb{K})) = \{0\}$, where $\operatorname{Id}_{\mathfrak{aff}(\mathbb{K})}$ denotes the \textit{identity map} of $\mathfrak{aff}(\mathbb{K})$, then the statement follows.
\end{remark}

\section{Upper and lower bounds of $\widehat{\phi}_{n,t}$, $t\neq 0,1$}

\begin{theorem}\label{bounds}
Let $\mathfrak{g}=(V,\mu)$ be a non-perfect Lie algebra and let $t=-1$ (or any $t\neq 0,1$). Then
$$
\operatorname{dim}(\Omega(\mathfrak{g})) \leq \widehat{\phi}_{n,t}(\mathfrak{g}) \leq \operatorname{dim}(\Omega(\mathfrak{g})) + \operatorname{dim}(L (\mathfrak{g}^{(2)}; \mathcal{Z}(\mathfrak{g})\cap  \mathfrak{g}^{(2)} ) ).
$$

\end{theorem}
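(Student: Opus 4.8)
The plan is to extract both inequalities from a single restriction map together with the rank--nullity theorem. Concretely, I would consider
$$
\rho \colon \mathcal{D}(t,1,0)(\mathfrak{g}) \longrightarrow L(\mathfrak{g}^{(2)}; \mathfrak{g}), \qquad \rho(D) = D \circ \iota_{\mathfrak{g}^{(2)}},
$$
which is plainly linear. The lower bound does not even require this: as recorded in the Notation block, $\Omega(\mathfrak{g})$ is contained in $\mathcal{D}(t,1,0)(\mathfrak{g})$, whence $\operatorname{dim}(\Omega(\mathfrak{g})) \leq \operatorname{dim}\mathcal{D}(t,1,0)(\mathfrak{g}) = \widehat{\phi}_{n,t}(\mathfrak{g})$.

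For the upper bound, the first step is to identify $\operatorname{Ker}(\rho)$ with $\Omega(\mathfrak{g})$. One inclusion is immediate: if $D \in \Omega(\mathfrak{g})$ then $\mathfrak{g}^{(2)} \subseteq \operatorname{Ker}(D)$ by definition, so $\rho(D)=0$. Conversely, if $D$ is a $(t,1,0)$-derivation with $\rho(D)=0$, then $D$ and the zero derivation coincide on $\mathfrak{g}^{(2)}$, so the elementary observation of the Notation block (which is where $t \neq 0$ is needed) forces $D \in \Omega(\mathfrak{g})$. Hence $\operatorname{Ker}(\rho) = \Omega(\mathfrak{g})$.

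Next I would constrain the image. By the remark following Lemma \ref{lematecnico2}, every $(t,1,0)$-derivation of $\mathfrak{g}$ sends $\mathfrak{g}^{(2)}$ into $\mathfrak{g}^{(2)} \cap \mathcal{Z}(\mathfrak{g})$; therefore $\rho(D) \in L(\mathfrak{g}^{(2)}; \mathcal{Z}(\mathfrak{g}) \cap \mathfrak{g}^{(2)})$ for every $D$, that is, $\operatorname{Im}(\rho) \subseteq L(\mathfrak{g}^{(2)}; \mathcal{Z}(\mathfrak{g}) \cap \mathfrak{g}^{(2)})$. Feeding the kernel computation and this containment into rank--nullity gives
$$
\widehat{\phi}_{n,t}(\mathfrak{g}) = \operatorname{dim}(\operatorname{Ker}\rho) + \operatorname{dim}(\operatorname{Im}\rho) \leq \operatorname{dim}(\Omega(\mathfrak{g})) + \operatorname{dim}L(\mathfrak{g}^{(2)}; \mathcal{Z}(\mathfrak{g}) \cap \mathfrak{g}^{(2)}),
$$
which is exactly the claimed upper bound.

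The argument is really an assembly of two facts already in hand, so the genuine content lives in those inputs rather than in the present theorem, and I expect no serious obstacle. The one point that rewards care — and the step a reader is most likely to scrutinize — is the identification $\operatorname{Ker}(\rho) = \Omega(\mathfrak{g})$: one must check that a $(t,1,0)$-derivation annihilating $\mathfrak{g}^{(2)}$ automatically has image inside $\mathcal{Z}(\mathfrak{g})$, which is precisely where $t \neq 0$ enters, via $t\,D\mu(X,Y) = \mu(DX,Y)$ with $D\mu(X,Y)=0$. I would also remark that non-perfectness of $\mathfrak{g}$ is not actually used in the estimate itself; it merely guarantees the bound is informative, since for perfect $\mathfrak{g}$ both sides collapse in agreement with the vanishing theorem proved above.
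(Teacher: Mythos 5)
Your proposal is correct and follows essentially the same route as the paper: the author also restricts a $(t,1,0)$-derivation to $\mathfrak{g}^{(2)}$, uses Lemma \ref{lematecnico2} to see that the restriction lands in $\mathcal{Z}(\mathfrak{g})\cap\mathfrak{g}^{(2)}$, identifies the kernel of this restriction with $\Omega(\mathfrak{g})$, and concludes by a dimension count (phrased there as injectivity of the induced map on $\mathcal{D}(t,1,0)(\mathfrak{g})/\Omega(\mathfrak{g})$ rather than rank--nullity). The only cosmetic difference is that you take the codomain to be $L(\mathfrak{g}^{(2)};\mathfrak{g})$ and then constrain the image, whereas the paper corestricts to $L(\mathfrak{g}^{(2)};\mathcal{Z}(\mathfrak{g})\cap\mathfrak{g}^{(2)})$ from the outset.
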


\begin{proof}
  The idea of the proof is similar to that of the Lemma \ref{lematecnico}. Let $D$ be a $(t,1,0)$-derivation of $\mathfrak{g}$ and consider the linear transformation $\widehat{D}:\mathfrak{g}^{(2)} \rightarrow \mathcal{Z}(\mathfrak{g})\cap  \mathfrak{g}^{(2)}$ defined by $\widehat{D} X = D X$ (formally and rigorously, $\widehat{D}$ is not $D \circ \iota_{\mathfrak{g}^{(2)}}$). The function $\widehat{D}$ is well defined because $D$ preserves $\mathfrak{g}^{(2)}$, and by Lemma \ref{lematecnico2}, $D$ sends $\mathfrak{g}^{(2)}$ to $\mathcal{Z}(\mathfrak{g})$. Now, consider the function $\varphi: \mathcal{D}(t,1,0)(\mathfrak{g}) \rightarrow  L(\mathfrak{g}^{(2)}; \mathcal{Z}(\mathfrak{g})\cap  \mathfrak{g}^{(2)} )$ that sends $D$ to $\widehat{D}$. The function $\varphi$ is a linear transformation that passes to the quotient $ \mathcal{D}(t,1,0)(\mathfrak{g}) /\Omega(\mathfrak{g})$ and so, it defines a linear transformation $\widetilde{\varphi}: \mathcal{D}(t,1,0)(\mathfrak{g}) /\Omega(\mathfrak{g}) \rightarrow L(\mathfrak{g}^{(2)}; \mathcal{Z}(\mathfrak{g})\cap  \mathfrak{g}^{(2)} )$.

  The only thing that remains to be proved is that $\widetilde{\varphi}$ is a injective function. Suppose $D \in \mathcal{D}(t,1,0)(\mathfrak{g})$ such that $\widehat{D}$ is the zero map from $\mathfrak{g}^{(2)}$ to  $\mathcal{Z}(\mathfrak{g})\cap  \mathfrak{g}^{(2)}$. We will show that $D \in \Omega(\mathfrak{g})$. Clearly $D$ satisfies the condition $\mathfrak{g}^{(2)} \subseteq \operatorname{Ker}(D)$.  Now we show that $\operatorname{Im}(D)\subseteq \mathcal{Z}(\mathfrak{g})$. Let $X, Y$  be arbitrary. Since $D$ is $(t,1,0)$-derivation of $\mathfrak{g}$, we have $\mu(DX,Y)=tD\mu(X,Y)$, and since $\mu(X,Y) \in \mathfrak{g}^{2}$, it follows that $\mu(DX,Y)=t\widehat{D}\mu(X,Y)=0$; and thus $D X \in \mathcal{Z}(\mathfrak{g})$.
\end{proof}

\begin{remark}
The preceding theorem is sharp in all dimensions. For instance, take $\mathfrak{g}$ to be $\mathfrak{h}_{3}(\mathbb{K})\times \mathbb{K}^{n}$, with $n\in \mathbb{N} \cup \{0\}$. If $\{e_1,e_2,e_3\}$ is a basis of $\mathfrak{h}_{3}(\mathbb{K})$ and $\mu(e_1,e_2)=e_3$, we consider the linear transformation $D \in L(\mathfrak{g};\mathfrak{g})$ defined by $e_1 \mapsto t e_1$, $e_2 \mapsto t e_2$, $e_3 \mapsto e_3$ and $\mathbb{K}^{n}\mapsto 0$ (when $n\neq0$). It is clear that $D$ is a $(t,1,0)$-derivation of $\mathfrak{g}$ which is not in $\Omega(\mathfrak{g})$. Thus, $1 + \operatorname{dim}(\Omega(\mathfrak{g})) \leq \widehat{\phi}_{3+n,t}(\mathfrak{g})$, and since $\mathfrak{g}^{(2)}=\mathcal{Z}(\mathfrak{g})\cap  \mathfrak{g}^{(2)}$ is a $1$-dimensional vector space, it follows from Theorem \ref{bounds} that the inequality becomes equality; i.e. $\widehat{\phi}_{3+n,t}(\mathfrak{g})$ is equal to $\operatorname{dim}(\Omega(\mathfrak{g})) + \operatorname{dim}(L (\mathfrak{g}^{(2)}; \mathcal{Z}(\mathfrak{g})\cap  \mathfrak{g}^{(2)} ) )$.

On the other hand, as we mentioned above, if $\mathfrak{h}=\mathfrak{aff}(\mathbb{K})\times \mathbb{K}^{m}$, with $m\in \mathbb{N}$, then $\widehat{\phi}_{2+m,t}(\mathfrak{g}) = \operatorname{dim}(\Omega(\mathfrak{g}))$.
\end{remark}

\begin{remark}
Theorem \ref{bounds} can be improved under additional assumptions about $\mathfrak{g}^{(2)}$. If $\mathfrak{g}_{2}:=\mu(\mathfrak{g}^{(2)},\mathfrak{g})$ is a nontrivial proper subset of $\mathfrak{g}^{(2)}$ (for instance, if $\mathfrak{g}$ is a $k$-step nilpotent Lie algebra, with $k\geq 3$), then we can change $\operatorname{dim}(L (\mathfrak{g}^{(2)}; \mathcal{Z}(\mathfrak{g})\cap  \mathfrak{g}^{(2)} ) )$ to $\operatorname{dim}(L (\mathfrak{g}^{(2)}/\mathfrak{g}_{2}; \mathcal{Z}(\mathfrak{g})\cap  \mathfrak{g}^{(2)} ) )$ in the conclusion of the theorem, because of Lemma \ref{lematecnico2}. 
\end{remark}

\begin{remark}
Under the assumption that $\mathfrak{g}_{2}:=\mu(\mathfrak{g}^{(2)},\mathfrak{g})$ is a nontrivial proper subset of $\mathfrak{g}^{(2)}$, since the identity map of $\mathfrak{g}$ is not in the image of the embedding of $\mathcal{D}(t,1,0)(\mathfrak{g})$ into $\mathcal{D}(1,1,0)(\mathfrak{g})=\mathcal{C}(\mathfrak{g})$  given in the proof of Theorem \ref{centroid}, we also have the upper bound $\widehat{\phi}_{n,t}(\mathfrak{g}) + 1 \leq \widehat{\phi}_{n,1}(\mathfrak{g})$, with $t\neq 0,1$. This bound is sharp in all dimensions: take $\mathfrak{g}$ to be the \textit{{standard filiform Lie algebra}} of dimension $n$ ($n\geq 4$), which is the semidirect product $\mathbb{K} \mathlarger{\mathlarger{\oset{\mbox{\footnotesize T}}{\ltimes}}} \mathbb{K}^{n}$ where $T \in L(\mathbb{K}^n;\mathbb{K}^n)$ is any nilpotent linear transformation with \textit{nilpotency index} $n$. An straightforward computation shows that $\mathcal{D}(1,1,0)(\mathfrak{g})= \operatorname{span}\{\operatorname{Id}_{\mathfrak{g}}\} \oplus \Omega(\mathfrak{g})$ and $\mathcal{D}(1,t,0)(\mathfrak{g})= \Omega(\mathfrak{g})$.
\end{remark}

\begin{remark}
Let $D$ be a $(t,1,0)$-derivation of a Lie algebra $\mathfrak{g}=(V,\mu)$ and let $\lambda \in C^{2}(V;V)$ be the skew-symmetric bilinear map defined by $\lambda(X,Y)=\mu(DX,Y)$.  It is a simple matter to see that $\lambda$ is a \textit{trivial solution} to the \textit{Maurer-Cartan (deformation) equation} of $\mathfrak{g}$
$$
\delta_{\mu}+\frac{1}{2}[\lambda,\lambda]=0;
$$
which is to say that $\lambda$ is a $2$-cocycle for the adjoint representation of the Lie algebra $\mathfrak{g}$ and $(V,\lambda)$ is a Lie algebra; and so $\mu + s \lambda \in \mathcal{L}(V)$ for all $s\in \mathbb{K}$. 

Thus, the results obtained provides information about this kind of \textit{linear deformations}.
\end{remark}



\end{document}